\newtheorem{lemma}{Lemma}
\newtheorem{theorem}{Theorem}
\newtheorem{statement}{Statement}
\title{Continued fractions and conformal mappings for domains with angle points}
\author{Pyotr N. Ivanshin}
\date{} 
\begin{document}
\maketitle

{\bf Abstract.} Here we construct the  conformal mappings with the help of the continued fraction approximations. These approximations converge to the algebraic root functions $\sqrt[N]{z}$, $N \in \mathbb{N}$, $z \in \mathbb{C}$, $\mathrm{Re}\, z>0$. We estimate  the convergence rate of the approximation sequences. Also we give the examples that illustrate the  conformal mapping construction.

{\bf Keywords.} Conformal mapping, approximation, continued fraction, complex variables.

{\bf MSC.} 30C30, 30C20.

\section{Introduction}
This article extends and develops  paper \cite{jca}. There we presented the reparametrization method of conformal mapping of the unit disk onto the given simply connected domain with a smooth boundary.  This method   is based on  reduction of Fredholm integral equation to a sufficiently large linear equation system and on the boundary curve reparametrization.   The solution possesses polynomial form that can be easily analyzed.

The method  can be considered as one of the rapidly converging methods according to classification of \cite{Port}. The computation cost is actually similar to Theodorsen’s method or Fornberg method \cite{Forn1}. Let us compare the reparametrization method of \cite{jca} with the other conformal mapping methods.

We do not consider the auxiliary mapping of the unit disk into subdomain of the given domain $D$ as in the set of osculation methods \cite{Ahl}. 
The method of  \cite{jca} does not require a  sufficiently good initial  approximation of the conformal mapping as the graphical methods such as that of \cite{HA}.
The
method does not apply any auxiliary constructions at the domain interior (domain triangulation \cite{Dr2}, circle packing \cite{RS},
domain decompositions, such as meshes of \cite{ZG}). We do not need any iterative conformal mappings as in the zipper algorithm or the Schwartz-Christoffel mapping \cite{Dr1, Lut}. We construct our polynomial solution differently to the Fornberg polynomial method \cite{Forn} that involves consequent approximations through suitable point choice at the domain boundary. Also we do not apply the solutions of auxiliary boundary value
problems (the conjugate function method, Wegmann method \cite{Weg1, Weg2}).
Finally, the advantages of the method presented in \cite{jca} are the following: 1) it is devoid of auxiliary constructions,
2) it brings us to the mapping function  in a polynomial form. The mapping function is a Taylor polynomial for the unit disk or a Laurent polynomial for the annulus in the case of multiconnected domains \cite{ShAb}, \cite{ASI}. 

Let us recall the basic construction steps of the reparametrization method  \cite{jca}. 

Consider a finite simply connected domain  $D$ bounded by the smooth curve $L= \{z=z(t), \ t\in[0,2\pi]\},\quad z(0)=z(2\pi)$. We trace the domain $D$ counterclockwise along  $L$ as the parameter $t$ increases. We only deal with the cases in which the boundary $L$   representation is as follows:
\begin{equation}\label{7steqm}
z(t)=x(t)+i y(t)=\sum\limits_{k=-m}^{n}c_k e^{i k t}.
\end{equation}
 Note  that any smooth boundary may be approximated by a Fourier polynomial of this type. 
 
If Fourier polynomial representation (\ref{7steqm})  of the curve  $L$ possesses no summands with the negative degrees of  $e^{i  t}$ then the function that maps the unit disk to the domain $D$ is immediately polynomial:
$$
Z(\zeta)=\sum\limits_{k=0}^{n}c_k \zeta^ k.
$$

Assume now that  representation (\ref{7steqm}) contains nonzero coefficients  $c_{-l}$, $l\in \mathbb{N}$. Then it is possible to construct an approximate conformal mapping under reparametrization of (\ref{7steqm})  \cite {Shir} leading to  the coefficients $c_{-l}$, $l\in \mathbb{N}$, elimination.

In order to find this reparametrization $t(\theta)$, $\theta \in [0,2 \pi]$, we first construct the inverse function  
$ \theta(t) = \arg(\zeta(z))|_{z=z(t)\in L}$. Here $\zeta(z)$ is the analytic function that gives the conformal mapping of $D$ onto the unit disk so that  $\zeta(0)=0$.
Let us denote by $q(t)$ the difference $\theta(t)- \arg z(t)$. 
  The necessary  condition for the function $\ln\frac{\zeta(z)}{z}$ to be analytic in $D$ is just as in \cite{Shir} the equation
\begin{eqnarray}\label{2steqm}
q(t)=\frac{1}{\pi}\int\limits_0^{2\pi} q(\tau) (\arg [z(\tau)-z(t)])'_{\tau} d\tau+\frac{1}{\pi}\int\limits_0^{2\pi}\ln |z(\tau)| (\ln |z(\tau)-z(t)|)'_{\tau} d\tau.
\end{eqnarray}

We consider the factor $(e^{i\tau}-e^{ i t})$ in the expression of $z(\tau)-z(t)$ in order to separate the improper VP integral in the last integral equation. Finally, the function $q(t)$
is the solution of the Fredholm integral equation of the second kind (\ref{2steqm}). 
%
%
Note that the integral kernel $\frac1\pi \frac{ \partial(\mathrm{arg}(z(\tau)-z(t)))}{\partial \tau}=\frac1\pi \mathrm{Im}[\frac{ \partial z'(\tau)}{z(\tau)-z(t)}]$ coincides with the operator $K_1(\tau, t)$ of  \cite{Weg3}.  We solve this integral equation reducing it to the finite linear equation system without converging iterations proposed in \cite{Weg3}. 

This integral equation has the set of solutions $q(t)=\alpha_0+\sum\limits_{p=1}^{\infty} \alpha_n \cos n t+\beta_n \sin n t$ that differ by an arbitrary summand $\alpha_0$. Equation (\ref{2steqm}) is uniquely resolvable if we set the value $\frac{1}{2\pi} \int\limits_{0}^{2\pi} q(\tau) d\tau=0$ \cite{jca}, or fix the boundary image $q(t_0)=q_0$ \cite{Weg3}. Indeed, the number $1$ is the simple eigenvalue of $K_1(\tau, t)$ with the eigenfunction $f_0\equiv 1$  \cite{Weg3} and corresponds to rotation of the unit disk. So we search only for the coefficients $\alpha_n, \beta_n$, $n \geq 1$. The operator $K_1(\tau, t)-I$  is invertible  in the subspace of $L^2[0, 2\pi]$ spanned by $\cos n t, \sin n t$, $n \geq 1$.

We search for the approximate function $q(t)$ in the form $q(t)=\sum\limits_{p=1}^{M} \alpha_n \cos n t+\beta_n \sin n t$, $M \in \mathbb{N}$.

Reduce equation (\ref{2steqm}) to the uniquely resolvable finite linear system over the Fourier coefficients  $\alpha_n, \beta_n$, $n \geq 1$, of the function $q(t)$. 

$$
\left( \begin{array}{cc}AA & AB \\ BA & BB \end{array} \right)
\left( \begin{array}{c} \alpha \\ \beta \end{array} \right)  =
\left( \begin{array}{c} F  \\  G \end{array} \right),
$$
here
$$
\alpha = \left( \begin{array}{c}  \alpha_1\\...\\ \alpha_M  \end{array} \right), \quad
\beta = \left( \begin{array}{c} \beta_1\\ ...\\ \beta_M  \end{array} \right). 
$$

The vectors
$$
F= \left( \begin{array}{c} f_1\\...\\ f_M  \end{array} \right),
\quad G= \left( \begin{array}{c} g_1\\...\\ g_M  \end{array} \right) 
$$ 
on the right-hand side of the equation system  consist of the corresponding Fourier coefficients obtained by the following technique: We separate  the summand $\cot\frac{\tau-t}{2}$ in the kernal, apply Hilbert formula  and find Fourier coefficients of the integral with the remained continuous kernel as usual.

The block matrices of size $M$
$$
AA, AB, BA, BB
$$
consist of the elements
$$
AA=\left( \delta_{l n}-\frac{1}{\pi^2}\int\limits_{0}^{2\pi} \cos (n\tau)  d\tau \int\limits_{0}^{2\pi} K(\tau,t) \cos (l t) dt \right)_{l,n=1}^{M}, 
$$ 
$$
AB=\left( -\frac{1}{\pi^2}\int\limits_{0}^{2\pi} \sin (n\tau)  d\tau \int\limits_{0}^{2\pi} K(\tau,t) \cos (l t) dt \right)_{l,n=1}^{M}, 
$$ 
$$
BA=\left( -\frac{1}{\pi^2}\int\limits_{0}^{2\pi} \cos (n\tau)  d\tau \int\limits_{0}^{2\pi} K(\tau,t) \sin (l t) dt \right)_{l,n=1}^{M}, 
$$ 
$$
BB=\left( \delta_{l n}-\frac{1}{\pi^2}\int\limits_{0}^{2\pi} \sin (n\tau) d\tau \int\limits_{0}^{2\pi} K(\tau,t) \sin (l t) dt \right)_{l,n=1}^{M}, 
$$ 
here $\delta_{l n}$ is the Kronecker delta function.  Also
$$
 K(\tau,t)=\mathrm{Im} \left[\ln\frac{z(\tau)-z(t)}{e^{i\tau}-e^{i t}}\right]'_{\tau}=
 $$
 $$
 =\mathrm{Im} \left[\ln\left(\sum\limits_{k=1}^{n} c_k e^{i k t}\sum\limits_{l=0}^{k-1}e^{i l (\tau-t)}-\sum\limits_{j=1}^{m} c_{-j} e^{-i j \tau}\sum\limits_{l=0}^{j-1}e^{i l (\tau-t)}\right)\right]'_{\tau}
$$
and
$$
L(\tau,t)=\mathrm{Re}\left[\ln\frac{z(\tau)-z(t)}{e^{i\tau}-e^{i t}}\right]'_{\tau}=
$$
$$
=\mathrm{Re} \left[\ln\left(\sum\limits_{k=1}^{n} c_k e^{i k t}\sum\limits_{l=0}^{k-1}e^{i l (\tau-t)}-\sum\limits_{j=1}^{m} c_{-j} e^{-i j \tau}\sum\limits_{l=0}^{j-1}e^{i l (\tau-t)}\right)\right]'_{\tau}.
$$ 

The cost of the linear system solution method is $O(N^2)$, where $N$ is the degree of the Fourier polynomial approximating $q(t)$. Now $z(t)=z(t(\theta))=Z(e^{i \theta})$, here $\theta(t)=\arg z(t)+q(t)$, and the unit disk is mapped to the domain bounded by the given smooth boundary  $z(t)$ with the help of the Cauchy integral formula. So we construct an approximate polynomial conformal mapping. Similar method was also applied for construction of the annulus conformal mapping onto an arbitrary multiconnected domain with the smooth boundary in \cite{ShAb, ShAb2}. 

Note that  we can reconstruct $q'(t)$ intead of $q(t)$ in the case of smooth boundary \cite{jca}. So this method  can also be considered as one of the methods using the derivatives \cite{Port}. 

 The drawback of the reparametrization method is that it does not cover the conformal mappings of the unit  disk onto domains with non-smooth boundaries.  For instance, in the case of a domain with the angle $\phi$ for $t=t_0$ equation (\ref{2steqm}) turns into
 \begin{eqnarray}\label{3steqm}
\phi q(t)=  \int\limits_{0}^{2 \pi}q(\tau) \frac{ \partial(\mathrm{arg}(z(\tau)-z(t)))}{\partial \tau} d \tau+\nonumber \\
+ \int\limits_{0}^{2 \pi} \ln |z(\tau)|  \frac{\partial [\ln |z(\tau)-z(t)|]}{\partial \tau}d\tau
\end{eqnarray}
at the point $t_0$. In order to overcome this difficulty we apply the additional conformal mapping which "straightens" the bondary curve at the corresponding point. Then we apply the reparamentrization method to the new domain with the smooth boundary and again apply the conformal mapping that "bends" the boundary  back to the initial one. Our aim is to represent this final mapping as the polynomial fraction.

In the article we apply the modification of the  conformal mapping construction of  \cite{jca} both for domains with boundary   angles and for slender regions. We present the mapping as a polynomial fraction. We first show that the method of \cite{jca} is applicable to domains with acute external angles. Then we present the  polynomial fraction construction for the internal angle equal to $\pi/2$ and conformally map the unit disk to the domain with such an angle. After that we construct the polynomial fraction for the angles $k\pi/N$, $k<N \in \mathbb{N}$. Finally we show that this approach  is valid for the conformal mapping of the unit disk to the slender region.

\section{The case of an internal angle greater than $\pi$}

The  method of  \cite{jca} allows us to solve the conformal mapping construction problem for any contour  with the boundary curve forming internal angles  greater than $\pi$. This can be illustrated by  certain examples.

Let the angle point correspond to the value $ 0 $ of the  parameter, the internal angle be equal to $ \pi \alpha, 2> \alpha> 1$. Then the representation of the boundary curve equation   in the neighborhood of the angle point has the form $ z (t) = (1-e^{i t})^{\alpha} K$,  $K \in \mathbb{R}$.
The difference between the Fourier series  partial sum $S_n (x)$ and the function $ f (x) $ itself is expressesed by the formula $ S_n (x) -f (x) = \frac{1}{\pi} \int \limits_{0 }^{\varepsilon} \frac{1}{2} (f (x + t) + f (x-t) -2f (x)) \frac{\sin (nt)}{t} dt + o (1)$ (\cite{Zig}, Chapter 2, formula (7.1)). In this case, $S_n (0) -z (0) = K\frac{F (n, \varepsilon, \alpha)}{\pi} + o (1) $, where
\begin{equation}\label{FN}
F(n, \varepsilon, \alpha)=2^{\alpha+1}\int\limits_{0}^{\varepsilon}\sin^{\alpha}(\frac{t}{2})  \cos(\frac{\alpha}{2}(t-\pi))\frac{\sin(nt)}{t}dt.
\end{equation}
  For $ \varepsilon \leq \frac{\pi}{2 n} $ we obtain
$$
F(n, \varepsilon, \alpha)=2^{\alpha+1}\int\limits_{0}^{\varepsilon} \sin^{\alpha}(\frac{t}{2}) \cos(\frac{\alpha}{2}(t-\pi))\frac{\sin(nt)}{t}dt\leq
$$
$$ \leq 2 \int\limits_{0}^{\varepsilon} t^{\alpha}\frac{\sin(nt)}{t}dt \leq 2  \int\limits_{0}^{\varepsilon} n t^{\alpha} dt=
$$
$$
=2 \frac{n\varepsilon^{\alpha+1}}{\alpha+1}\leq  \frac{\pi^{\alpha+1}}{2^{\alpha} n^{\alpha}(\alpha+1)}\leq \frac{\pi}{2}(\frac{\pi}{2 n})^{\alpha} \leq \frac{\pi^2}{4 n}.
$$ 
Hence for all $\alpha> 1 $ the difference between the values of the Fourier series partial sum $ S_n (x) $ and the function $ z (t) $ itself can be made arbitrarily small for a sufficiently large $ n$. That is, we have the convergence of the Fourier series at the angle point, regardless of the angle. This allows us to apply the conformal mapping construction method of \cite{jca}.

Example 1. Consider the piecewise circular contour (two semicircles and one circle quarter) with the external angle  $\pi/2$ (Fig.1).
First we approximate the boundary with a Fourier polynomial of degree $10$. Then we construct the approximating polynomial of degree $50$.
 \begin{figure}\label{fig1}
   \begin{center}
\includegraphics[width=4truecm,height=4truecm]{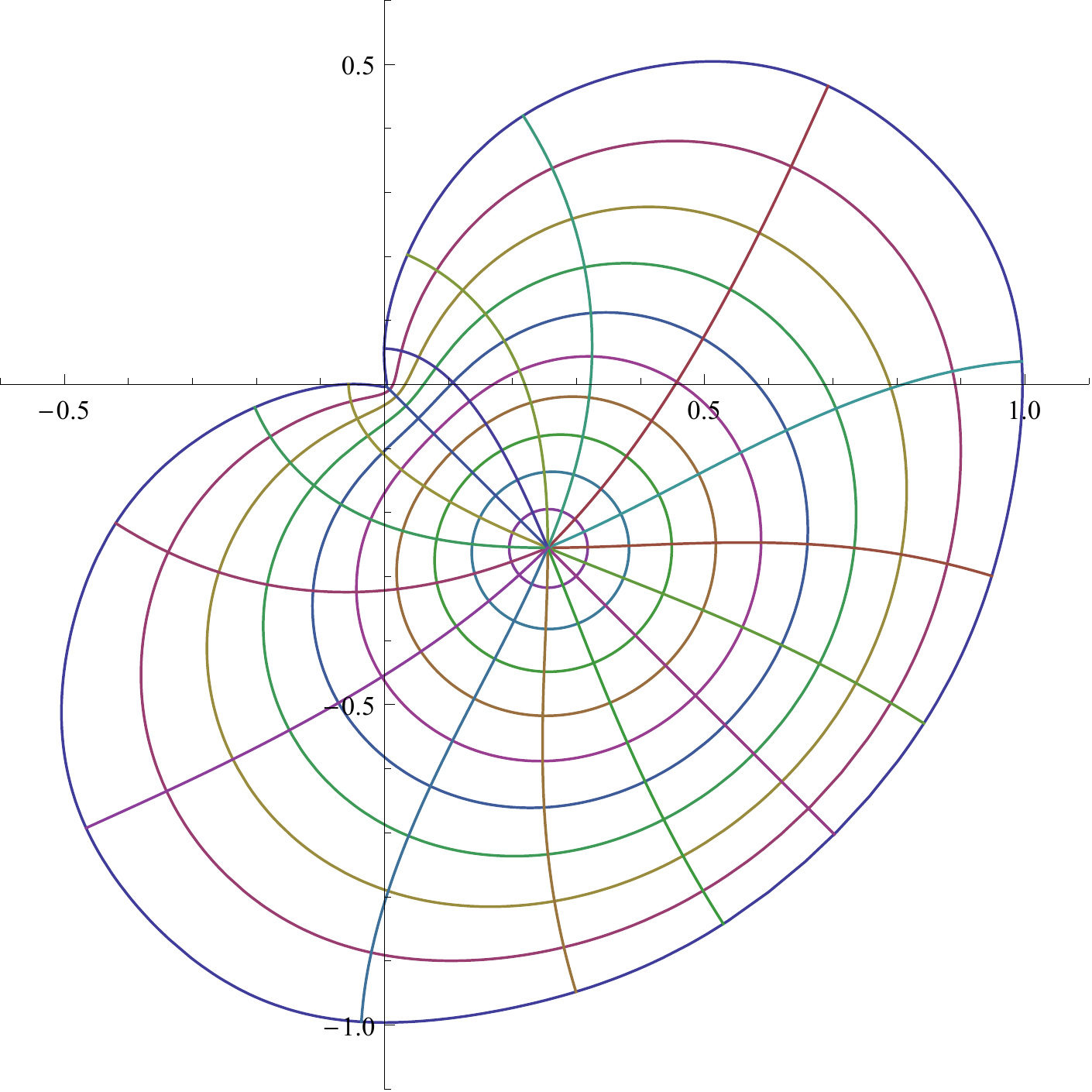}
\caption{The approximation of the  contour with the external angle  $\pi/2$ and the polar net image.}
  \end{center}
\end{figure}

Example 2. The  three-semicircle contour   with the external angle  $0$ (Fig.2). Again we first approximate the boundary with a Fourier polynomial of degree $10$. We then  construct the approximating polynomial of degree $50$.

 \begin{figure}\label{fig2}
   \begin{center}
\includegraphics[width=5truecm,height=4truecm]{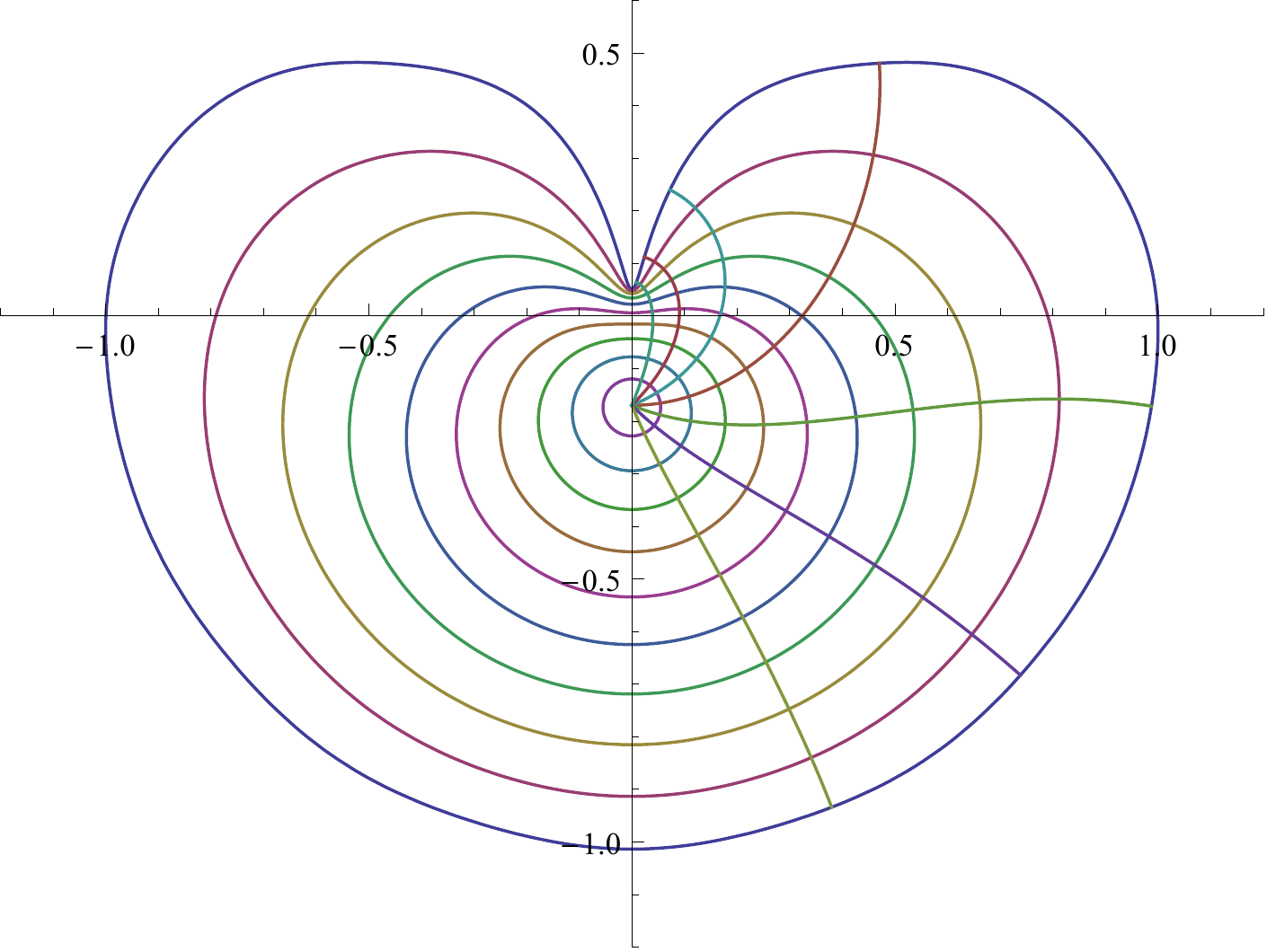}
\caption{The approximation of the  contour with the external angle  $0$ and a part of the polar net image}
  \end{center}
\end{figure}

The similar example for the doubly connected domain with rectangular inner boundary can be found in \cite{ShAb2}.
  
\section{The construction scheme for the case of an internal angle  less than $\pi$}

It is computationally difficult to apply the conformal mapping construction of \cite{jca} for a domain whose boundary forms an  acute internal angle. Then the mapping polynomial converges slowly and the resulting conformal mapping angle point does not look like an angle at all (sort of a bubble). 

 Consider a curve whose behavior at an angle point is similar to    $(1-e^{i t})^{\alpha}K$,  $K \in \mathbb{R}$, with $\alpha \in (0, 1)$ in a neighborhood of $t=0$.
    Then again by Dini criterion  ( \cite{Zig}, Chapter 2, Section 6) we have a singularity of type $t^{\alpha-1}$ at $t=0$ and the Fourier series slowly converges at $t=0$. In order to esimate $F(n, \varepsilon, \alpha)$, defined by relation  (\ref{FN}), we consider the following inequalities: $\sin^{\alpha}(t/2) \geq (\frac{ t}{\pi })^{\alpha}$, $\frac{\sin (nt)}{t} \geq \frac{2 n}{\pi}$, $\cos(\frac{\alpha}{2} (t -\pi)) \geq \cos(\frac{\alpha}{2})$. 
Then we have $F(n, \frac{\pi}{2 n}, \alpha) \geq \cos(\frac{\alpha \pi}{2}) \frac{2}{ n^{\alpha}(\alpha+1)}$ for  $\varepsilon =\frac{\pi}{2 n}$. Hence $F(n, \frac{\pi}{2 n}, \frac{1}{\ln n}) \to \frac{2}{e}$  as $n \to \infty$. So, for the singular point $ t = 0$, the Fourier series  rate of  convergence  to the generating function $ z (t) $ is the less the closer $ \alpha $ is to $ 0$. Thus, the method from \cite{jca} is difficult to apply, since even the Fourier series poorly approximate a curve with such an angle point.

Let the domain boundary be angled and the angle be equal to $k\pi/N$, $N =2,3, \ldots$, $k \in\{1, 2, \ldots,  N-1\}$.
 
 The main idea of the mapping construction is to first put the angle point at $0$, make the domain smooth with the mapping  $z^{N/k}$, construct the conformal mapping onto this smooth domain and then apply the fraction polynomial approximation of the inverse mapping $\sqrt[N]{z^k}$. Note that the domain should completely lie in the right half-plane. In the other case we should apply fraction linear mapping in order to put the domain into the angle.
  
The continued fraction converges to $\sqrt{x}$ at $x=0$ faster than the Taylor expansion of the function $\sqrt{\frac{x-a}{a}+1}$ into degrees of $(x-a)$ to the function $\sqrt{\frac{x-a}{a}+1}$ itself  at $x=0$ \cite{sf}. The most thorough and refined method here is the  Pad\'e rational function approximation of the algebraic function \cite{apt, apt2}. Note that these approximations are optimal in the set of fraction polynomials though their construction requires application of Euclidean algorythm  and additional investigation of the holomorphness domain $D$.

The main result here is that the recursively constructed relations converge to  the continued fraction approximating any rational root $\sqrt[N]{z}$, $N \in \mathbb{N}$. The constructed sequence is clearly not  Pad\'e one. But the construction itself is fairly simple, does not possess nonunique solutions and  provides convergence to the root function at the complex right half-plane. Similar results can be found in \cite{sf}. Also the author is sure that this result can be proved along the lines of \cite{Kh}. Again the proof should apply induction and we need to consider the roots of the polynomials instead of the mapping itself. Note also that the fractional polynomial mappings can be applied, for instance, to exact solution of the elasticity theory problems \cite{book}.

\section{The square root approximation}

First consider the basic problem of the square root fraction polynomial representation. It is well-known that $\sqrt{z}=1+\frac{z-1}{\sqrt{z}+1}$. This gives rise to the following recursive procedure:

\begin{lemma} Assume that $f_n(z)=1+\frac{z-1}{1+f_{n-1}(z)}$, $f_1(z)=1+\frac{z-1}{1+z}$. Then the following facts hold true for $z$ with $\mathrm{Re}[z]>0$:

1. $\mathrm{Re}[f_{n}(z)]>0$

2. $\mathrm{Im}[f_{n}(z)]$ has the same sign as $Im[z]$.

3. The fraction $\frac{\mathrm{Im}[f_{n}(z)]}{\mathrm{Re}[f_{n}(z)]}$ has the same sign as the fraction $\frac{\mathrm{Im}[z]}{\mathrm{Re}[z]}$ and $|\frac{\mathrm{Im}[f_{n}(z)]}{\mathrm{Re}[f_{n}(z)]}|<|\frac{\mathrm{Im}[z]}{\mathrm{Re}[z]}|$.
\end{lemma}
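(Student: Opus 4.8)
The plan is to rewrite the recursion in Möbius form and then prove statements 1--3 \emph{together} by induction on $n$. Putting the two terms over a common denominator gives $f_n(z)=\frac{f_{n-1}(z)+z}{f_{n-1}(z)+1}$, with $f_1(z)=\frac{2z}{1+z}$; the map $w\mapsto\frac{w+z}{w+1}$ has fixed points $\pm\sqrt z$, of which $\sqrt z$ is the attracting one when $\mathrm{Re}\,z>0$, which is the structural reason the scheme works. Whenever $\mathrm{Re}[f_{n-1}(z)]>0$ the denominator $1+f_{n-1}(z)$ has real part larger than $1$, so the recursion stays well defined all along the induction. The base case $n=1$ is a direct computation: writing $z=x+iy$ with $x>0$ and rationalizing, $\mathrm{Re}[f_1]=\frac{2(x+x^2+y^2)}{(1+x)^2+y^2}>0$, $\mathrm{Im}[f_1]=\frac{2y}{(1+x)^2+y^2}$ has the sign of $y$, and $\frac{\mathrm{Im}[f_1]}{\mathrm{Re}[f_1]}=\frac{y}{x}\cdot\frac{1}{1+x+y^2/x}$, where the last factor lies in $(0,1)$ because $x>0$; so all three claims hold.

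For the inductive step put $w=u+iv:=f_{n-1}(z)$, so that by hypothesis $u>0$, $v$ has the sign of $y$, and (the quantitative part of statement 3, rewritten) $x|v|<u|y|$. Rationalizing $f_n=\frac{(w+z)\,\overline{(w+1)}}{|w+1|^{2}}$, the real and imaginary parts have common positive denominator $|w+1|^{2}$ and numerators $u^2+u+xu+x+v^2+vy$ and $y(u+1)+v(1-x)$ respectively. Statement 1 is then immediate, since $vy\ge 0$ (same signs) and the remaining terms of the first numerator are strictly positive. For statement 2, if $x\le 1$ the second numerator is visibly of the sign of $y$; if $x>1$ and (say) $y>0$, hence $v>0$, then $v(x-1)<vx<uy<y(u+1)$ using the hypothesis $vx<uy$, so the numerator is again positive; the case $y<0$ is symmetric, and $y=0$ forces $v=0$ and reality of $f_n$.

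For statement 3 the sign claim is free from statements 1--2 (since $\mathrm{Re}[f_n]>0$). The magnitude claim $\bigl|\mathrm{Im}[f_n]/\mathrm{Re}[f_n]\bigr|<|y|/x$, after clearing the common denominator and taking $y>0$, becomes the inequality $yu^2+yu+yv^2+vy^2+xv(x-1)>0$; this holds because $xv(x-1)\ge 0$ when $x\ge 1$, while for $0<x<1$ one has $yu+xv(x-1)=yu-xv(1-x)>yu-xv>0$ by the hypothesis $xv<uy$, and all the other terms are positive. This closes the induction.

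The main obstacle, and the reason for carrying the three statements jointly, is that both the sign of $\mathrm{Im}[f_n]$ (statement 2, in the regime $x>1$) and the contraction estimate (statement 3, in the regime $x<1$) fail without the \emph{strict} inequality $x|v|<u|y|$ handed down by statement 3 one level earlier; proving statements 1--2 first and statement 3 afterwards is therefore not an option. The only genuine computation is the algebraic inequality in statement 3; everything else is sign bookkeeping. One should also note the degenerate case $\mathrm{Im}\,z=0$, in which the sign conditions are to be read as equalities to zero; this is consistent, since each $f_n$ maps the positive real axis into itself.
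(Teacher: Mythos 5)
Your proof is correct and takes essentially the same route as the paper: a joint induction on $n$ carrying all three items, using exactly the rationalized numerators $u^2+v^2+u+x+xu+yv$ and $y(u+1)+v(1-x)$ that the paper works with. The only cosmetic difference is that you establish item 3 by cross-multiplying and splitting on $x\lessgtr 1$, whereas the paper argues by a termwise (mediant-type) comparison of the corresponding summands of numerator and denominator.
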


\begin{proof}

The proof is by induction on $n$.

The induction base is $f_1(z)=1+\frac{z-1}{1+z}$.

1. $\mathrm{Re}[f_1(z)]=\frac{2 |z|^2+2 \mathrm{Re}[z]}{|z+1|^2}>0$.

2. $\mathrm{Im}[f_1(z)]=\frac{2 \mathrm{Im}[z]}{|z+1|^2}$ is of the same sign as $\mathrm{Im}[z]$.

3. Assume that $\mathrm{Im}[z]>0$, then $\frac{\mathrm{Im}[f_{n}(z)]}{\mathrm{Re}[f_{n}(z)]}=\frac{2 \mathrm{Im}[z]}{2 \mathrm{Re}[z]+2 |z|^2}<\frac{\mathrm{Im}[z]}{\mathrm{Re}[z]}$.

The induction step then is as follows:

1. The nominator similar to that of the induction base is $\mathrm{Re}[z]+\mathrm{Re}[z] \mathrm{Re}[f_{n-1}(z)]+\mathrm{Im}[z]\mathrm{Im}[f_{n-1}(z)]+\mathrm{Re}[f_{n-1}(z)]+|f_{n-1}(z)|^2>0$ by conjecture.

2. Similarly the sign of $\mathrm{Im}[f_{n}(z)]$ coincides with the sign  of $\mathrm{Im}[f_{n-1}]+\mathrm{Im}[z]+$ 

$+\mathrm{Im}[z]\mathrm{Re}[f_{n-1}(z)]-\mathrm{Re}[z]\mathrm{Im}[f_{n-1}(z)]$. The last two summands are of the same sign as $\mathrm{Im}[z]$ by conjecture.

3. Consider $\frac{\mathrm{Im}[f_{n-1}(z)]+\mathrm{Im}[z]+\mathrm{Im}[z]\mathrm{Re}[f_{n-1}(z)]-\mathrm{Re}[z] \mathrm{Im}[f_{n-1}(z)]}{\mathrm{Re}[f_{n-1}(z)]+\mathrm{Re}[z]+\mathrm{Re}[z] \mathrm{Re}[f_{n-1}(z)]+\mathrm{Im}[z]\mathrm{Im}[f_{n-1}(z)]+|f_{n-1}(z)|^2}$. Note that the respective summands of the nominator and denominator meet the desired relation so the fraction itself is less in modulus than $\frac{\mathrm{Im}[z]}{\mathrm{Re}[z]}$.
\end{proof}

\begin{statement} There are no points at the right complex half-plane at which the derivative of $f_{n}(z)$ vanishes.
\end{statement}
\begin{proof}
1. Consider $z$ so that $\mathrm{Im}[z] \neq 0$. Then by item 3 of Lemma 1 we have  $\frac{\partial f_n(z)}{\partial\arg(z)} \neq 0$. 
Indeed for $n=1$, locally $\arg(f_1(z))=\arg(z)-\arg(z+1) = k_1\arg(z)$, $0<k_1<1$ by item 3 of Lemma 1. Let $\mathrm{Im}[z]>0$. Then $\forall n$ and $\arg(f_n(z))=\arg(f_{n-1}(z)+z)-\arg(f_{n-1}(z)+1)$ we have $k_{n-1}\arg(z)<\arg(f_{n-1}(z)+z)<\arg(z)$ and $\arg(f_{n-1}(z)+1)= \tilde{k}_{n-1}\arg(z) < k_{n-1}\arg(z)$. So $0<(k_{n-1}-\tilde{k}_{n-1})\arg(z)<\arg(f_n(z))<(1- \tilde{k}_{n-1})\arg(z)$.

2. Consider $z=x \in \mathbb{R}^+$. Then we must prove that $f_{n-1}'(x)+1+f_{n-1}(x)-xf_{n-1}'(x)>0$ or more precisely, that $f_{n-1}(x)-xf_{n-1}'(x)>0$. The proof is by induction. Base $f_1'(x)=\frac{2}{(x+1)^2}$. The induction step is as follows: Consider $f_{n}(x)-xf_{n}'(x)=\frac{f_{n-1}(x)+x}{1+f_{n-1}(x)}-x\frac{f_{n-1}'(x)+1+f_{n-1}(x)-xf_{n-1}'(x)}{(1+f_{n-1}(x))^2}=\frac{f_{n-1}(x)+f_{n-1}^2(x)-xf_{n-1}'(x)(1-x)}{1+f_{n-1}(x)}>0$ by the induction assumption and item 1 of Lemma 1.

Also since $\mathrm{Re}[1+f_{n-1}(z)]>0$ for $z$ from the right half plane the function $f_{n}(z)$ does not have poles in this set.
\end{proof}

\begin{theorem} The functions $f_n(z)=1+\frac{z-1}{1+f_{n-1}(z)}$ converge to $\sqrt{z}$  with the convergence rate $(\frac{1-\sqrt{z}}{1+\sqrt{z}})^n$ for $z$, $\mathrm{Re}[z]>0$.
\end{theorem}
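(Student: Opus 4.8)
The plan is to introduce the error quantity $e_n(z) := \dfrac{f_n(z) - \sqrt{z}}{f_n(z) + \sqrt{z}}$, where $\sqrt{z}$ denotes the principal branch (well-defined and with positive real part for $\mathrm{Re}\,z > 0$). The point of this substitution is that the recursion $f_n = 1 + \frac{z-1}{1+f_{n-1}}$ becomes, after the identity $\sqrt{z} = 1 + \frac{z-1}{\sqrt{z}+1}$ is subtracted, a clean multiplicative recursion for $e_n$. Concretely, I would compute
\begin{equation*}
f_n - \sqrt{z} = \frac{z-1}{1+f_{n-1}} - \frac{z-1}{1+\sqrt{z}} = (z-1)\,\frac{\sqrt{z} - f_{n-1}}{(1+f_{n-1})(1+\sqrt{z})},
\end{equation*}
and a parallel computation for $f_n + \sqrt{z}$ using $\sqrt{z} = \frac{(\sqrt z+1)^2 - 2}{\sqrt z + 1} \cdot \tfrac12 \cdot 2$ — more cleanly, $2\sqrt z = (1+f_{n-1}) \cdot \frac{2\sqrt z}{1+f_{n-1}}$ is awkward, so instead I write $f_n + \sqrt z = \frac{1 + f_{n-1} + z - 1 + \sqrt z(1+f_{n-1})}{1+f_{n-1}} = \frac{(f_{n-1}+\sqrt z)(1+\sqrt z)}{1+f_{n-1}}$, using $z = (\sqrt z)^2$ to factor $z + \sqrt z f_{n-1} + \sqrt z + f_{n-1} = (\sqrt z + f_{n-1})(\sqrt z + 1)$. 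Dividing the two displays gives
\begin{equation*}
e_n(z) = -\,\frac{z-1}{(1+\sqrt z)^2}\cdot\frac{f_{n-1} - \sqrt z}{f_{n-1}+\sqrt z} \cdot \frac{1}{1} = \frac{1-\sqrt z}{1+\sqrt z}\cdot\frac{\sqrt z - 1}{\sqrt z + 1}\cdot(-1)\cdot e_{n-1}(z),
\end{equation*}
i.e. after simplifying $\frac{z-1}{(1+\sqrt z)^2} = \frac{(\sqrt z-1)(\sqrt z+1)}{(\sqrt z+1)^2} = \frac{\sqrt z - 1}{\sqrt z + 1}$, one obtains the remarkably simple relation $e_n(z) = \left(\dfrac{1-\sqrt z}{1+\sqrt z}\right) e_{n-1}(z)$ (the sign working out because of the leading minus). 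Iterating, $e_n(z) = \left(\frac{1-\sqrt z}{1+\sqrt z}\right)^{n-1} e_1(z)$, and since $e_1(z) = \frac{f_1 - \sqrt z}{f_1 + \sqrt z}$ with $f_1 = 1 + \frac{z-1}{1+z} = \frac{2z}{1+z}$ is itself readily checked to equal $\left(\frac{1-\sqrt z}{1+\sqrt z}\right)^2$ (or at worst is bounded), we get $e_n(z) = C(z)\left(\frac{1-\sqrt z}{1+\sqrt z}\right)^{n}$ with $C$ a fixed bounded factor, giving the claimed rate.

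Next I would recover $f_n - \sqrt z$ from $e_n$. Solving $e_n = \frac{f_n - \sqrt z}{f_n + \sqrt z}$ for $f_n$ gives $f_n - \sqrt z = \frac{2\sqrt z\, e_n}{1 - e_n}$, so $|f_n(z) - \sqrt z| = \dfrac{2|\sqrt z|\,|e_n(z)|}{|1 - e_n(z)|}$. To turn the convergence of $e_n$ into convergence of $f_n$ I need two things: (a) the contraction factor satisfies $\left|\frac{1-\sqrt z}{1+\sqrt z}\right| < 1$ for every $z$ with $\mathrm{Re}\,z > 0$ — this is immediate since $\mathrm{Re}\,\sqrt z > 0$ places $\sqrt z$ in the right half-plane, which the Möbius map $w \mapsto \frac{1-w}{1+w}$ sends into the unit disk; and (b) the denominator $|1 - e_n(z)|$ stays bounded away from $0$, which follows because $|e_n| \le |e_1| \cdot 1 < 1$ once $n \ge 1$, or more robustly because $f_n + \sqrt z$ never vanishes on the right half-plane — here I can invoke item 1 of Lemma 1 ($\mathrm{Re}\,f_n > 0$) together with $\mathrm{Re}\,\sqrt z > 0$, so $\mathrm{Re}(f_n + \sqrt z) > 0$ and in particular $f_n + \sqrt z \ne 0$, hence $e_n \ne 1$. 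Combining, $|f_n(z) - \sqrt z| \le \mathrm{const}(z) \cdot \left|\frac{1-\sqrt z}{1+\sqrt z}\right|^{n} \to 0$, which is exactly the assertion.

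The genuinely delicate point is \emph{uniformity and the behaviour near the boundary} $\mathrm{Re}\,z = 0$ and near $z = 0$: there $\left|\frac{1-\sqrt z}{1+\sqrt z}\right| \to 1$, so although the sequence still converges pointwise, the rate degrades and the constant $C(z) = \frac{2|\sqrt z|}{|1-e_1(z)|}$ must be controlled (it is bounded near $z=0$ since $|\sqrt z| \to 0$, but one should check $|1 - e_1(z)|$ does not also vanish there — it does not, since $e_1(0)$ can be computed explicitly). I would therefore state the convergence as holding on each compact subset of the open right half-plane with the rate $\left|\frac{1-\sqrt z}{1+\sqrt z}\right|^n$ up to a locally bounded prefactor, and separately remark that pointwise convergence persists on all of $\{\mathrm{Re}\,z > 0\}$. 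A secondary bookkeeping nuisance is pinning down $e_1$ exactly versus merely bounding it; computing $f_1 = \frac{2z}{1+z}$ and then $\frac{f_1 - \sqrt z}{f_1 + \sqrt z} = \frac{2z - \sqrt z(1+z)}{2z + \sqrt z(1+z)} = \frac{2\sqrt z - (1+z)}{2\sqrt z + (1+z)} = \frac{-(1-\sqrt z)^2}{(1+\sqrt z)^2}$ settles it cleanly and even makes the sign in the recursion consistent, giving the exact formula $e_n(z) = -\left(\dfrac{1-\sqrt z}{1+\sqrt z}\right)^{n+1}$ and hence the sharp statement $f_n(z) - \sqrt z = \dfrac{-2\sqrt z\left(\frac{1-\sqrt z}{1+\sqrt z}\right)^{n+1}}{1 + \left(\frac{1-\sqrt z}{1+\sqrt z}\right)^{n+1}}$.
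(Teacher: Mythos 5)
Your proposal is correct, and it reaches the paper's conclusion by a genuinely different (and cleaner) mechanism. The paper works with $\varepsilon=z-\sqrt z$ and unwinds the recursion directly by induction, arriving at $f_n(z)=\sqrt z+\frac{\varepsilon(1-\sqrt z)^n}{(1+\sqrt z)^n+\varepsilon K_n}$ with an auxiliary sequence $K_n=(1+\sqrt z)^{n-1}+K_{n-1}(1-\sqrt z)$, which it then sums as a geometric series and passes to the limit to see that the error term tends to $0$ at the rate $\left(\frac{1-\sqrt z}{1+\sqrt z}\right)^n$. You instead linearize the Möbius iteration through the transformed error $e_n=\frac{f_n-\sqrt z}{f_n+\sqrt z}$, obtaining the exact multiplicative law $e_n=\frac{1-\sqrt z}{1+\sqrt z}\,e_{n-1}$, the explicit value $e_1=-\left(\frac{1-\sqrt z}{1+\sqrt z}\right)^2$, and hence the closed form $f_n(z)-\sqrt z=\frac{-2\sqrt z\,w^{n+1}}{1+w^{n+1}}$ with $w=\frac{1-\sqrt z}{1+\sqrt z}$; this is the classical fixed-point/cross-ratio trick for continued fractions. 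What your route buys is an exact formula with an explicit prefactor $2\sqrt z$ and no limit computation for an auxiliary sequence, a sharper exponent ($w^{n+1}$ rather than $w^n$), and a transparent treatment of the denominator: $|e_n|\le|e_1|<1$ (or, as you note, $\mathrm{Re}\,f_n>0$ and $\mathrm{Re}\,\sqrt z>0$ from Lemma 1 give $f_n+\sqrt z\ne 0$, which is also what legitimizes defining $e_n$ and dividing by $1+f_{n-1}$ in the recursion). What the paper's route buys is that it generalizes more directly to the $N$-th root case treated in Theorem 2, where the same $\varepsilon$--$K_n$ bookkeeping is reused, whereas the cross-ratio linearization is special to the quadratic case. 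Your closing remark about the degeneration of the rate as $z$ approaches the imaginary axis or $0$ is a worthwhile precision that the paper leaves implicit, since its statement (and proof) are pointwise in $z$.
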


\begin{proof}
First note that  $|\frac{1-\sqrt{z}}{1+\sqrt{z}}|<1$ for $z$, $\mathrm{Re}[z]>0$.

Consider  $\varepsilon=z-\sqrt{z}$. Then $\displaystyle f_1(z)=\sqrt{z}+\frac{\varepsilon(1-\sqrt{z})}{1+\sqrt{z}+\varepsilon}$, $\displaystyle f_2(z)=\sqrt{z}+\frac{\varepsilon(1-\sqrt{z})^2}{(1+\sqrt{z})^2+\varepsilon((1+\sqrt{z})+(1-\sqrt{z}))}$, $\ldots$,
$\displaystyle f_n(z)=\sqrt{z}+\frac{\varepsilon(1-\sqrt{z})^n}{(1+\sqrt{z})^n+\varepsilon K_n}$, here $\displaystyle K_n=(1+\sqrt{z})^{n-1}+K_{n-1}(1-\sqrt{z})$, $K_1=1$. 

Hence $\displaystyle K_n=(1+\sqrt{z})^{n-1}(1+(\frac{1-\sqrt{z}}{1+\sqrt{z}})\frac{K_{n-1}}{(1+\sqrt{z})^{n-2}})=\ldots=(1+\sqrt{z})^{n-1}(1+\frac{1-\sqrt{z}}{1+\sqrt{z}}+\ldots+(\frac{1-\sqrt{z}}{1+\sqrt{z}})^{n-1})$. Thus $\displaystyle \lim\limits_{n\to \infty}\frac{K_n}{(1+\sqrt{z})^{n-1}}=\frac{1}{1-\frac{1-\sqrt{z}}{1+\sqrt{z}}}=\frac{1+\sqrt{z}}{2\sqrt{z}}$.

So, $$
\lim\limits_{n\to \infty}\frac{\varepsilon(1-\sqrt{z})^n}{(1+\sqrt{z})^n+\varepsilon K_n}=\lim\limits_{n\to \infty}\frac{\varepsilon(1-\sqrt{z})^n}{(1+\sqrt{z})^n (1+\varepsilon \frac{K_n}{(1+\sqrt{z})^{n}})}=
$$
$$
\displaystyle =\varepsilon \lim\limits_{n\to \infty}\frac{(1-\sqrt{z})^n}{(1+\sqrt{z})^n (1+\frac{\varepsilon}{1+\sqrt{z}} \frac{K_n}{(1+\sqrt{z})^{n-1}})}=(z-\sqrt{z})\lim\limits_{n\to \infty}\frac{(1-\sqrt{z})^n}{(1+\sqrt{z})^n (1+\frac{\sqrt{z}-1}{2})}=
$$
$$
=\frac{z-\sqrt{z}}{ 1+\frac{\sqrt{z}-1}{2}}\lim\limits_{n\to \infty}\frac{(1-\sqrt{z})^n}{(1+\sqrt{z})^n }=0.
$$
This completes the proof.
\end{proof}

Assume now that we have a convex domain with acute internal angles and we need to construct the conformal mapping of the unit disk onto this domain.
The main construction steps are as follows: we make the domain as round as possible with square mappings. If the resulting domain does not overlap itself then we construct the approximating polynomial according to the method of \cite{jca}. Finally we construct the square root approximations  of the resulting image inverse to the squares of the first step.

Example 3. Let us construct an approximate conformal map of the unit disk onto the contour with the internal angle  $\pi/2$. Here we have the 11th iteration of the square root approximation and degree 50 polynomial for the initial domain (Fig.3).

  \begin{figure}\label{fig3}
   \begin{center}
\includegraphics[width=4truecm,height=4truecm]{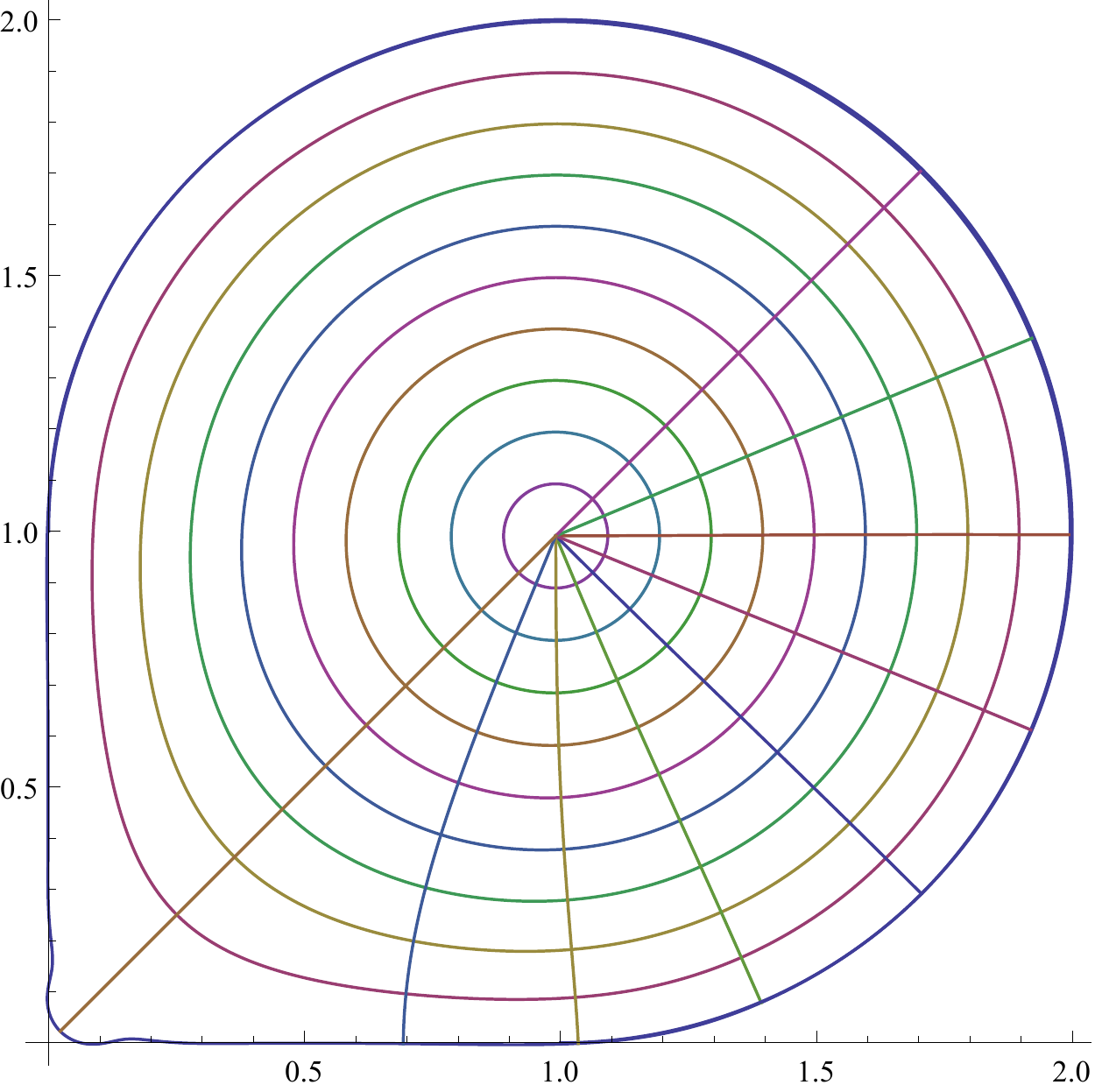}
\caption{Contour with the internal angle equal to $\pi/2$}
  \end{center}
\end{figure}

\section{The case of $z^{\frac{k}{N}}$}
Consider a natural number $N \in \mathbb{N}$.
The recursive representation of the $N$-th root then takes the following form:
$$
\displaystyle z^{\frac{1}{N}}=1+\frac{z-1}{z^{\frac{N-1}{N}}+\ldots+z^{\frac{2}{N}}+z^{\frac{1}{N}}+1}=
$$
$$
\displaystyle =1+\frac{z-1}{\frac{z}{z^{\frac{1}{N}}}+\frac{z}{z^{\frac{2}{N}}}+\ldots+z^{\frac{2}{N}}+z^{\frac{1}{N}}+1}
$$
Also at the same time we have
$$
\displaystyle z^{\frac{k}{N}}=z^{\frac{k-1}{N}}+\frac{z-z^{\frac{k-1}{N}}}{z^{\frac{N-k}{N}}+\ldots+z^{\frac{2}{N}}+z^{\frac{1}{N}}+1}
$$
Then we express the right-hand side of the first relation only through $z^{\frac{1}{N}}$.
Certain statements similar to that of Section 4 hold and we have the convergence of these fractions to the relative function degrees at the complex right half-plane. The only principally new appearance here is  simultaneous proof of the result for all  the representations of $z^{\frac{k}{N}}$, $k =1, 2 \ldots, N-1$. Here we need the additional induction step for the recursive representations of $z^{\frac{k}{N}}$ and $\frac{z}{z^{\frac{k}{N}}}$, $k =1, 2 \ldots, [N/2]$.
Again the most important part is  the first construction step. Assume then that for any $k \in \{1, 2 \ldots, [N/2]\}$ the first approximation of  $z^{\frac{k}{N}}$ equals $1+\frac{z-1}{z+1}$.

\begin{theorem}
For any $z$, $\mathrm{Re}[z]>0$, $N =2, 3, \ldots$, and  $k \in \{1, \ldots, N-1\}$ the sequence of fraction-polynomial approximations converges to $\displaystyle z^{\frac{k}{N}}$ with the convergence rate 
$$\displaystyle
|z^{(k-1)/N} \left(\frac{z-N(z^{1/N}-1)z^{\frac{[N/2]}{N}}-1}{z-1}\right)^n|.
$$
\end{theorem}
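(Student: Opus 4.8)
The scheme of Theorem 2 is repeated, but now for the whole coupled family at once. Let $g_n^{(k)}(z)$ denote the $n$-th approximation to $z^{k/N}$, with $g_1^{(k)}(z)=1+\frac{z-1}{z+1}$ for every $k\in\{1,\ldots,[N/2]\}$; it is produced from the exact identity
$$z^{k/N}=z^{(k-1)/N}+\frac{z-z^{(k-1)/N}}{z^{(N-k)/N}+\cdots+z^{1/N}+1}$$
by first rewriting every $z^{j/N}$ with $j>[N/2]$ as $z/z^{(N-j)/N}$ and then substituting $g_{n-1}^{(j)}(z)$ for $z^{j/N}$, and $z/g_{n-1}^{(N-j)}(z)$ for the high powers, throughout the right-hand side. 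The first step is to establish, by a double induction running over $k=1,\ldots,[N/2]$ inside each fixed step $n$, the exact analogues of Lemma 1 and Statement 1 for every member of the family: $\mathrm{Re}[g_n^{(k)}(z)]>0$, $\mathrm{Im}[g_n^{(k)}(z)]$ carries the sign of $\mathrm{Im}[z]$, and $\arg g_n^{(k)}(z)$ is a positive multiple less than $1$ of $\arg z$. The only genuinely new point — the one announced in the text — is that the terms $z/g_{n-1}^{(N-j)}(z)$ must also be kept in the right half-plane; this follows from $\mathrm{Re}[g_{n-1}^{(N-j)}(z)]>0$ together with the bound $|\arg g_{n-1}^{(N-j)}(z)|<|\arg z|<\pi/2$, so that all the denominators $z^{(N-k)/N}+\cdots+1$ stay off $0$ and $g_n^{(k)}$ has neither poles nor critical points in $\{\mathrm{Re}[z]>0\}$.

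For the rate I would imitate the substitution $\varepsilon=z-\sqrt z$ used for $N=2$. Put $g_n^{(k)}(z)=z^{k/N}+e_n^{(k)}(z)$, subtract the exact identity from the recursion, and write $S_k=z^{(N-k)/N}+\cdots+z^{1/N}+1$ and $\Delta_{n-1}$ for the total perturbation of the $k$-th denominator. A short calculation gives
$$e_n^{(k)}=e_{n-1}^{(k-1)}\left(1-\frac{1}{S_k+\Delta_{n-1}}\right)-\frac{(z-z^{(k-1)/N})\,\Delta_{n-1}}{S_k(S_k+\Delta_{n-1})},$$
where $\Delta_{n-1}$ is, to leading order, a fixed linear combination of the $e_{n-1}^{(j)}(z)$: the coefficient is $1$ when $z^{j/N}$ enters $S_k$ directly and $-z^{(2j-N)/N}$ when it enters as $z/z^{(N-j)/N}$ (from $\frac{d}{dg}(z/g)=-z/g^{2}$). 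Hence the vector $\mathbf e_n=(e_n^{(1)},\ldots,e_n^{([N/2])})$ obeys $\mathbf e_n=M(z)\,\mathbf e_{n-1}+O\!\left(\left(\max\limits_j|e_{n-1}^{(j)}(z)|\right)^{2}\right)$ for an explicit matrix $M(z)$ — the composition of the index shift $e_{n-1}^{(k-1)}\mapsto e_n^{(k)}$ with the coupling through $\Delta_{n-1}$. The claim to verify is that $M(z)$ has dominant eigenvalue
$$\rho(z)=\frac{z-N(z^{1/N}-1)z^{[N/2]/N}-1}{z-1}=\frac{\sum\limits_{j=0}^{N-1}z^{j/N}-Nz^{[N/2]/N}}{\sum\limits_{j=0}^{N-1}z^{j/N}}$$
(the second form using $z-1=(z^{1/N}-1)\sum_{j=0}^{N-1}z^{j/N}$) with an eigenvector whose $k$-th entry is proportional to $z^{(k-1)/N}$; unwinding the recursion along this direction — exactly the role of the geometric sum $K_n/(1+\sqrt z)^{n-1}\to(1+\sqrt z)/(2\sqrt z)$ in Theorem 2 — then yields $|e_n^{(k)}(z)|=O\!\left(|z^{(k-1)/N}\rho(z)^{n}|\right)$. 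As a check, for $N=2$ one has $[N/2]=1$, $M(z)$ is the scalar $\frac{1-\sqrt z}{1+\sqrt z}$, and $\rho(z)$ collapses, after $(\sqrt z)^{2}=z$, to $\frac{1-\sqrt z}{1+\sqrt z}$, recovering Theorem 2.

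The step I expect to be the real obstacle is twofold: proving $|\rho(z)|<1$ throughout $\{\mathrm{Re}[z]>0\}$, and showing the quadratic remainder does not upset the geometric decay, i.e. that $M(z)$ really is the linearisation governing the true recursion and that the off-diagonal and $z/z^{j/N}$ contributions do not raise the spectral radius above $|\rho(z)|$. For $|\rho(z)|<1$, writing $S_1=\sum_{j=0}^{N-1}z^{j/N}$ and dividing the second expression for $\rho$ by $z^{[N/2]/N}$, I would estimate $\mathrm{Re}\!\left[\sum_{m=-[N/2]}^{N-1-[N/2]}z^{m/N}\right]$: the half-plane bound forces $|\arg z^{m/N}|<\pi/4$, so each conjugate pair $z^{m/N}+z^{-m/N}$ contributes more than $\sqrt 2$ to the real part, and summing shows this real part exceeds $N/2$, which is exactly $|S_1-Nz^{[N/2]/N}|<|S_1|$; near $z=1$ the numerator of $\rho$ vanishes to order $\geq 2$ while the denominator vanishes to order $1$, so $\rho(z)\to 0$, and as $z\to\infty$ one has $\rho(z)\to 1$ from inside the unit disk, in parallel with $\left|\frac{1-\sqrt z}{1+\sqrt z}\right|\to 1$. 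Controlling the remainder is done as for $N=2$: the positivity statements of the first step keep every denominator bounded away from $0$, which first yields $\max_k|e_n^{(k)}(z)|=O(r^{n})$ for some $r<1$ by a crude contraction estimate, and then a bootstrap along the dominant eigenvector of $M(z)$ sharpens $r$ to $\rho(z)$.
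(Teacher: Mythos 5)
Your proposal rests on the same two pillars as the paper's proof --- the per-step error factor $\rho(z)=\frac{S_1-Nz^{[N/2]/N}}{S_1}$ with $S_1=1+z^{1/N}+\cdots+z^{(N-1)/N}$, converted via $z-1=(z^{1/N}-1)S_1$ into the form in the statement, and the key bound $|\rho(z)|<1$ on $\mathrm{Re}\,z>0$ --- but executes both steps differently. The paper never forms a matrix: it tracks the single scalar error $\varepsilon$ of the $z^{1/N}$-approximant, posits that the errors of the coupled $z^{k/N}$-approximants stay proportional to $kz^{(k-1)/N}\varepsilon$ to leading order, and writes the explicit form $f_n=z^{1/N}+\varepsilon\,(S_1-Nz^{[N/2]/N})^n/(S_1^n+\varepsilon K_n)$ with a geometric-series bound on $K_n/S_1^n$, exactly as in the square-root theorem; your $M(z)$/dominant-eigenvalue formulation is the same linearisation made explicit (your eigenvector guess $z^{(k-1)/N}$ versus the paper's $kz^{(k-1)/N}$ differs only by a constant, which the rate statement absorbs since it is claimed only up to a multiple). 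Be aware that the spectral claim for $M(z)$ and the bootstrap against the quadratic remainder are left unproved in your text; in fairness, the paper supplies nothing more rigorous at this exact point --- it asserts the propagation pattern "as in the square root case" --- so you have not omitted an argument the paper actually contains, but you also cannot borrow one from it. Where your route genuinely improves on the paper is the crucial inequality (the paper's Lemma 2): dividing by $z^{[N/2]/N}$ and using $|w-N|<|w|\iff\mathrm{Re}\,w>N/2$, with each term $z^{m/N}$, $|m|\le[N/2]$, confined to $|\arg|<\pi/4$ so that each conjugate pair contributes more than $\sqrt2$ to the real part, is a complete and cleaner argument covering all $N$ at once; the paper instead passes to the boundary rays $\arg z^{1/N}=\pm\pi/(2N)$ by a maximum-principle remark and then projects the terms of numerator and denominator onto a fixed line, with separate even/odd bookkeeping. (Your side remark that the numerator of the second form of $\rho$ vanishes to order at least $2$ at $z=1$ is true but inessential, since the first form already gives $\rho(1)=0$.)
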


\begin{proof}
Let the first approximation of $z^{1/N}$ equal $z^{1/N}+ \varepsilon$. Then the first approximation of $z^{k/N}$ equals $z^{k/N}+ k z^{1/N}\varepsilon+o(\varepsilon)$ and that of 
$z^{\frac{N-k}{N}}$ is $z^{\frac{N-k}{N}}-k \varepsilon z^{\frac{N-k-1}{N}} +o(\varepsilon)$, $k=1, \ldots, [\frac{N}{2}]$. We have the next approximation of $z^{1/N}$ equal to
$$
f_2(z)=1+(z-1)/[1+z^{1/N}+\ldots+z^{\frac{N-1}{N}}+
$$
$$
+\varepsilon(1+2z^{1/N}+\ldots+[N/2]z^{\frac{[N/2]-1}{N}}
-([N/2]-1)z^{\frac{[N/2]}{N}}-\ldots -z^{\frac{N-2}{N}})]=
$$
$$
=z^{1/N}+\{\varepsilon(1+2z^{1/N}+\ldots+[N/2]z^{\frac{[N/2]-1}{N}}-
$$
$$
-([N/2]-1)z^{\frac{[N/2]}{N}}-\ldots -z^{\frac{N-2}{N}})(1-z^{1/N})\}/\{1+z^{1/N}+\ldots+z^{\frac{N-1}{N}}+
$$
$$
+\varepsilon(1+2z^{1/N}+\ldots+[N/2]z^{\frac{[N/2]-1}{N}}-([N/2]-1)z^{\frac{[N/2]}{N}}-\ldots -z^{\frac{N-2}{N}})\}=
$$
$$
=z^{1/N}+\varepsilon\{z^{\frac{N-1}{N}}+\ldots+z^{\frac{[N/2]+1}{N}}-(N-1)z^{\frac{[N/2]}{N}}+z^{\frac{[N/2]-1}{N}}+\ldots+1\}/\{1+z^{1/N}+\ldots+z^{\frac{N-1}{N}}+
$$
$$
+\varepsilon(1+2z^{1/N}+\ldots+[N/2]z^{\frac{[N/2]-1}{N}}-([N/2]-1)z^{\frac{[N/2]}{N}}-\ldots -z^{\frac{N-2}{N}})\}.
$$
The third approximation then equals
$$
f_3(z)=z^{1/N}+\varepsilon(z^{\frac{N-1}{N}}+\ldots+z^{\frac{[N/2]+1}{N}}-(N-1)z^{\frac{[N/2]}{N}}+z^{\frac{[N/2]-1}{N}}+\ldots+1)^2/[(1+z^{1/N}+\ldots+z^{\frac{N-1}{N}})^2+
$$
$$
+\varepsilon(1+2z^{1/N}+\ldots+[N/2]z^{\frac{[N/2]-1}{N}}-([N/2]-1)z^{\frac{[N/2]}{N}}-\ldots -z^{\frac{N-2}{N}})((1+z^{1/N}+\ldots+z^{\frac{N-1}{N}})+
$$
$$
+(z^{\frac{N-1}{N}}+\ldots+z^{\frac{[N/2]+2}{N}}-(N-1)z^{\frac{[N/2]+1}{N}}+z^{\frac{[N/2]}{N}}+\ldots+1))].
$$
Again as in the square root case we have 
$$
f_n(z)=z^{1/N}+\varepsilon\frac{(z^{\frac{N-1}{N}}+\ldots+z^{\frac{[N/2]+1}{N}}-(N-1)z^{\frac{[N/2]}{N}}+z^{\frac{[N/2]-1}{N}}+\ldots+1)^n}{(1+z^{1/N}+\ldots+z^{\frac{N-1}{N}})^n+\varepsilon K_n}.
$$
Here $K_n=K_{n-1} ((1+z^{1/N}+\ldots+z^{\frac{N-1}{N}})+(z^{\frac{N-1}{N}}+\ldots+z^{\frac{[N/2]+1}{N}}-(N-1)z^{\frac{[N/2]}{N}}+z^{\frac{[N/2]-1}{N}}+\ldots+1))$. 

Since $\displaystyle |\frac{K_n}{(1+z^{1/N}+\ldots+z^{\frac{N-1}{N}})^n}|$ is bounded by 
$$
\displaystyle |1+\frac{z^{\frac{N-1}{N}}+\ldots+z^{\frac{[N/2]+1}{N}}-(N-1)z^{\frac{[N/2]}{N}}+z^{\frac{[N/2]-1}{N}}+\ldots+1}{1+z^{1/N}+\ldots+z^{\frac{N-1}{N}}}|
$$ 
and  
$$
\displaystyle \frac{|z^{\frac{N-1}{N}}+\ldots+z^{\frac{[N/2]+1}{N}}-(N-1)z^{\frac{[N/2]}{N}}+z^{\frac{[N/2]-1}{N}}+\ldots+1|}{|1+z^{1/N}+\ldots+z^{\frac{N-1}{N}}|}<1
$$ 
we have the convergent sequence for any $z$, $\mathrm{Re}[z]>0$.

\begin{lemma}
For $z$ such that $\mathrm{Re}[z]>0$ and any $N =2, 3, \ldots$, we have
$$
\displaystyle \frac{|z^{\frac{N-1}{N}}+\ldots+z^{\frac{[N/2]+1}{N}}-(N-1)z^{\frac{[N/2]}{N}}+z^{\frac{[N/2]-1}{N}}+\ldots+1|}{|1+z^{1/N}+\ldots+z^{\frac{N-1}{N}}|}<1.
$$
\end{lemma}

\begin{proof}
In order to prove the relation consider $z$ such that $\mathrm{Re}[z]>0$, $\mathrm{Im}[z]\geq 0$. Consider $t=z^{1/N}=r^{1/N} e^{i\phi}$, $\phi\in (-\frac{\pi}{2N}, \frac{\pi}{2N})$. The maximal value of the fraction happens for $\phi=\pm \frac{\pi}{2N}$ as a boundary value of an analytic function on $t$. Indeed then we have the function $\displaystyle \frac{t^{N-1}+\ldots+t^{[N/2]+1}-(N-1)t^{[N/2]}+t^{[N/2]-1}+\ldots+1}{1+t+\ldots+t^{N-1}}$ that does not possess poles in the right half-plane since the denominator real part is strictly positive for $t$ such that $\mathrm{Re}[t]>0$, $|arg[t]| <\frac{\pi}{2N}$.

Assume first that $N$ is even.
 Consider the real part of the nominator
$$
\displaystyle r^{\frac{N-1}{N}}\cos(\frac{(N-1)\pi}{2 N})+\ldots +r^{\frac{1}{N}}\cos(\frac{\pi}{2 N})+1-N r^{1/2}\cos(\pi/4)=
$$
$$\displaystyle
=r^{\frac{N-1}{N}}\sin(\frac{\pi}{2 N})+\ldots+r^{\frac{1}{2}-\frac{1}{N}}\sin(\frac{(N-2)\pi}{2N})+ r^{1/2}\cos(\pi/4)+
$$
$$
 + r^{\frac{1}{2}+\frac{1}{N}}\cos(\frac{(N-2)\pi}{2N})+\ldots +r^{\frac{1}{N}}\cos(\frac{\pi}{2 N})+1-N r^{1/2}\cos(\pi/4).
$$
Similarly for the imaginary part we have
$$\displaystyle
r^{\frac{N-1}{N}}\cos(\frac{\pi}{2 N})+\ldots+r^{\frac{1}{2}-\frac{1}{N}}\cos(\frac{(N-2)\pi}{2N})+ r^{1/2}\sin(\pi/4)+
$$
$$
 + r^{\frac{1}{2}+\frac{1}{N}}\sin(\frac{(N-2)\pi}{2N})+\ldots +r^{\frac{1}{N}}\sin(\frac{\pi}{2 N})-N r^{1/2}\sin(\pi/4).
$$
In order to compare the absolute values of the nominator and denominator we project their components onto the same line $\phi=\pi/4$ since the absolute value of the negative deformation $-N r^{1/2}(\cos(\pi/4)+\sin(\pi/4))$ is maximal in this direction.
For any $k=1, \ldots, N-1$, the absolute value of $z^{\frac{k}{N}}$ projection onto this line  is  $\sqrt{2}$ times less than the number  $z^{\frac{k}{N}}$ real and imaginary part sum.

Then for any $k=1, \ldots, [N/2]-1$, we have
$$\displaystyle
r^{1-\frac{k}{N}}\sin(\frac{k\pi}{2N})+r^{\frac{k}{N}}\cos(\frac{k\pi}{2N})+r^{1-\frac{k}{N}}\cos(\frac{k\pi}{2N})+r^{\frac{k}{N}}\sin(\frac{k\pi}{2N})-\sqrt{2}r^{1/2}=
$$
$$\displaystyle
=r^{1/2}(r^{1/2-\frac{k}{N}}\sin(\frac{k\pi}{2N})+r^{\frac{k}{N}-1/2}\cos(\frac{k\pi}{2N})+r^{1/2-\frac{k}{N}}\cos(\frac{k\pi}{2N})+r^{\frac{k}{N}-1/2}\sin(\frac{k\pi}{2N})-\sqrt{2})\geq
$$
$$\displaystyle
\geq r^{1/2}(r^{\frac{k}{N}-1/2}\cos(\frac{k\pi}{2N})+r^{1/2-\frac{k}{N}}\cos(\frac{k\pi}{2N})-\sqrt{2})\geq
$$
$$\displaystyle
\geq r^{1/2}\sqrt{2}(r^{\frac{k}{N}-1/2}+r^{1/2-\frac{k}{N}}-2)
\geq 0.
$$

The equality happens only for $z=0$.

Let $N$ be an odd number. Then similarly to the even case we project our sums onto the line $\phi=\frac{[N/2]\pi}{2N}$ and gather $z^{\frac{k}{N}}$ and $z^{\frac{N-1-k}{N}}$, $k=0, \ldots, [N/2]$. The projection absolute value for any $k=0, \ldots, N-1$, equals $\mathrm{Re}[z^{\frac{k}{N}}]\cos(\frac{[N/2]\pi}{2N})+\mathrm{Im}[z^{\frac{k}{N}}]\sin(\frac{[N/2]\pi}{2N})$.

The relation we need then equals
$$\displaystyle
(r^{1-\frac{k+1}{N}}\cos(\frac{(N-k-1)\pi}{2N})+r^{\frac{k}{N}}\cos(\frac{k\pi}{2N})) \cos(\frac{[N/2]\pi}{2N})+
$$
$$\displaystyle
+(r^{1-\frac{k+1}{N}}\sin(\frac{(N-k-1)\pi}{2N})+r^{\frac{k}{N}}\sin(\frac{k\pi}{2N})) \sin(\frac{[N/2]\pi}{2N})-
$$
$$\displaystyle
-r^{\frac{[N/2]}{N}}(\cos^2(\frac{[N/2]\pi}{2N})+\sin^2(\frac{[N/2]\pi}{2N}))=
$$
$$\displaystyle
=r^{1-\frac{k+1}{N}}\cos(\frac{([N/2]-k)\pi}{2N})+r^{\frac{k}{N}}\cos(\frac{(k-[N/2])\pi}{2N})-r^{\frac{[N/2]}{N}}\geq 
$$
$$
\geq r^{\frac{[N/2]}{N}} ((r^{\frac{N-1}{2N}-\frac{k}{N}}+r^{\frac{k}{N}-\frac{N-1}{2 N}})\frac{1}{\sqrt{2}}-1) \geq
0.
$$
The lemma is proved.
\end{proof}

The convergence rate for $z^{k/N}$ can be estimated by some multiple of 
$$\displaystyle |z^{(k-1)/N} \left(\frac{z^{\frac{N-1}{N}}+\ldots+z^{\frac{[N/2]+1}{N}}-(N-1)z^{\frac{[N/2]}{N}}+z^{\frac{[N/2]-1}{N}}+\ldots+1}{1+z^{1/N}+\ldots+z^{\frac{N-1}{N}}}\right)^n|=
$$
$$
=|z^{(k-1)/N} \left(\frac{z-N(z^{1/N}-1)z^{\frac{[N/2]}{N}}-1}{z-1}\right)^n|.
$$ 
So the more acute the angle and the closer $z$ is to $0$ the worse is the approximation convergence. 

This completes the proof of the theorem.
\end{proof}

We now construct the following mappings exactly as in Example 3.

Example 4. The contour (two lines and the circular sector) with the angle $\pi/3$. We first approximate the unfolded domain by the method of \cite{jca} and then fold the result with the fractional polynomial mapping. We apply the recursive formula 
$$
g_n(z)=1+\frac{z-1}{\frac{z}{g_{n-1}(z)}+g_{n-1}(z)+1}, \; g_1(z)=1+\frac{z-1}{z+1}.
$$

The unfolded domain was approximated by the polynomial of degree $50$. We next apply the 6th fraction iteration to fold the domain back to the angled one (Fig.4).

  \begin{figure}[h]\label{fig5}
   \begin{center}
\includegraphics[width=5truecm,height=3truecm]{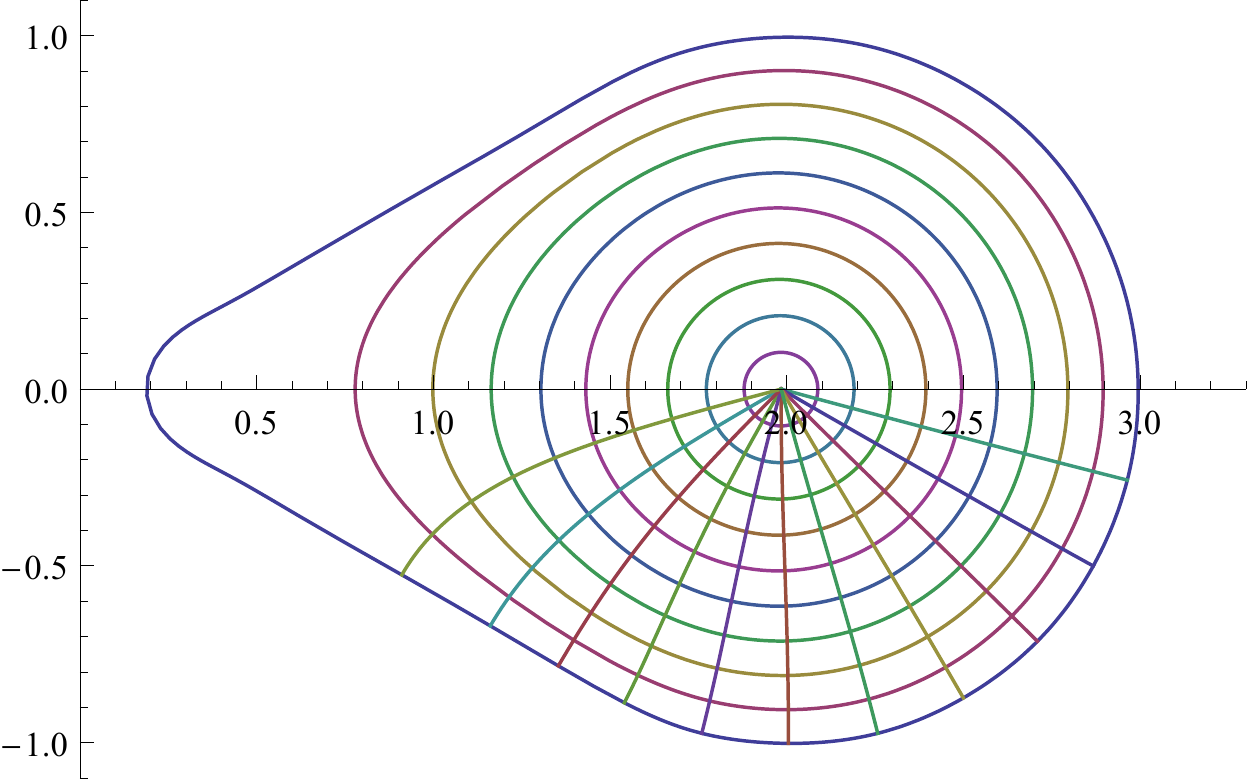}
\caption{Sixth approximation of the contour with the internal angle equal to $\pi/3$ and a part of the polar net image}
  \end{center}
\end{figure}

Example 5. Similar contour (two lines and the circular sector) with the angle $2 \pi/3$. We again approximate the unfolded domain by the method of \cite{jca} and then fold the result with the fractional polynomial mapping. Here the main formula is simply 
$$
h_n(z)=\frac{z}{g_{n}(z)}
$$
for $g_{n}(z)$ of Example 4.
The unfolded domain was approximated by the polynomial of degree $50$. We next apply the 4th fraction iteration to fold the domain back to the angled one (Fig.5). 
  \begin{figure}[h]\label{fig6}
   \begin{center}
\includegraphics[width=4truecm,height=4truecm]{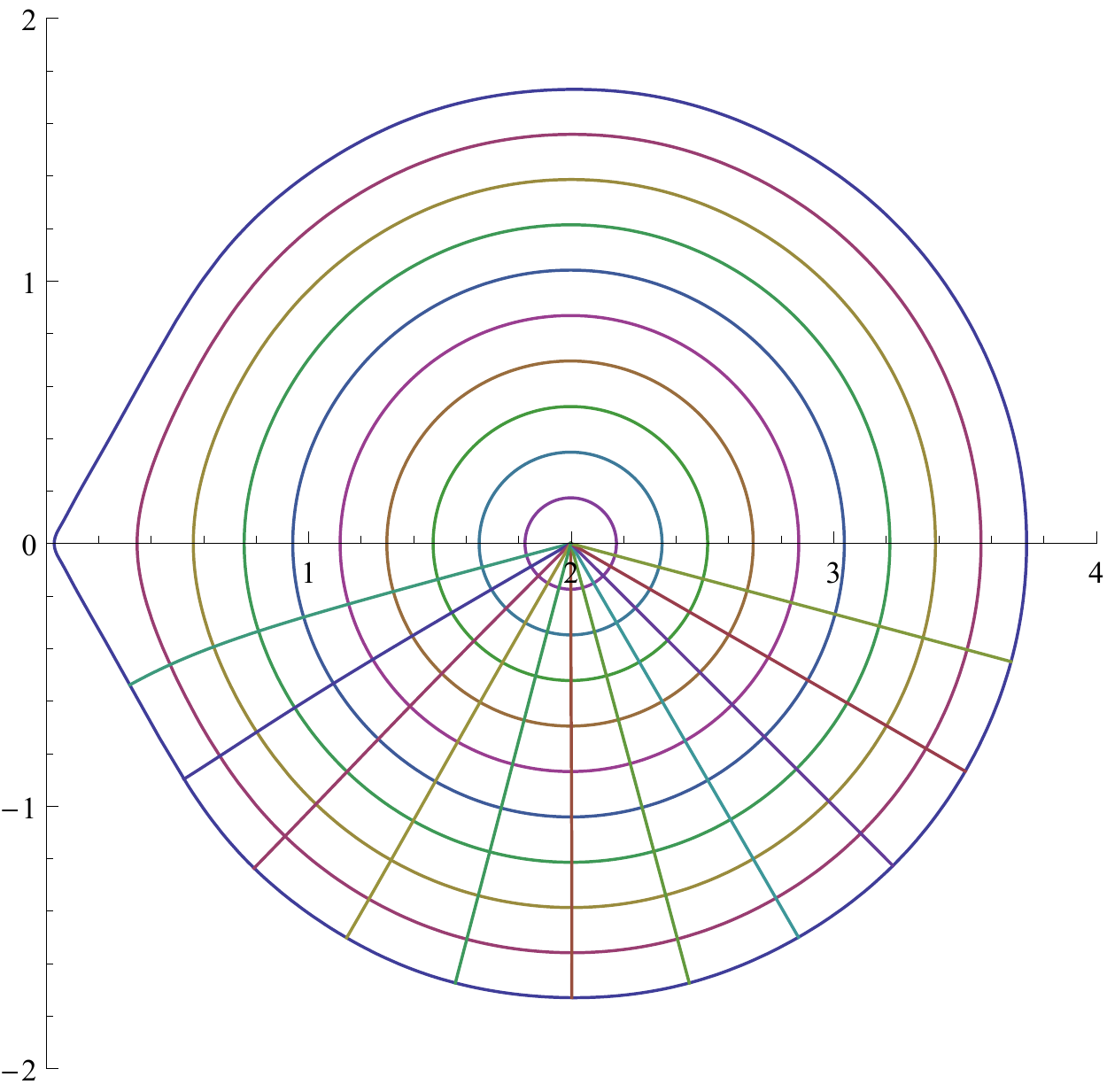}
\caption{Fourth approximation of the contour with the internal angle equal to $2\pi/3$ and a part of the polar net image}
  \end{center}
\end{figure}

These examples show us that the more acute is the internal angle the harder it is to approximate it.

\section{The case of thin domains}

Consider the case of slender regions.
 The second  problem for us is the case of relatively thin domains (e.g. ellipse with two significantly different axes).
Consider  the  integral equation of \cite{jca} kernel behavior for $ \tau $ close to the point $t$ of the largest possible curvature $\kappa(t)$: $ \frac{d}{d \tau} \mathrm{arctg} (\frac{y (\tau) -y (t)} {x (\tau) -x (t)}) = \kappa (t) | z (t) | '/ 2 + o (1)$. Then the diagonal elements of the relative linear equation system  matrix are close to $\kappa(t)$ and are also large. Thus, the greater the curvature $ \kappa (t) $ of the curve in $ t$, the worse the convergence of the polynomial solution.

 The authors of  \cite{UE} numerically solve the singular integral equation in order to find the conformal mappings from elliptic to slender regions.
The method of recursive fractions is also applicable to the conformal mapping construction of a disk onto a thin domain. The main problem here is the so-called point crowding phenomenon.  
Here we achieve the similar results (domain sides ratio $1/4$) with our method as a natural application. We first make the domain less slender with the help of the square mapping $(z-a)^2$, here the point $a$ lies outside the domain and close to its boundary point of maximal curvature. We cannot take this point at the boundary itself since then we achieve the domain that cannot be immediately inserted into the right half-plane at the neighbourhood of $a$. Secondly we apply the approximate conformal mapping construction algorithm.
Finally we apply the square root approximation in order to return to the domain with the given boundary.

 Now, if a domain lies between two sides of the right angle closely to the vertex then we consider  the mapping of the disk onto the squared domain and the square root approximation of  the angle.

Example 6. Consider the ellipse of semiaxes  $1$ and $1/4$: $x^2+16 y^2=1$. Let us construct an approximate conformal mapping of the unit disk onto this ellipse. 

The initial method of \cite{jca} provides us with the following result for the polynomial of degree $1200$ (Fig.6).
  \begin{figure}[h]\label{fig45}
   \begin{center}
\includegraphics[width=8truecm,height=2truecm]{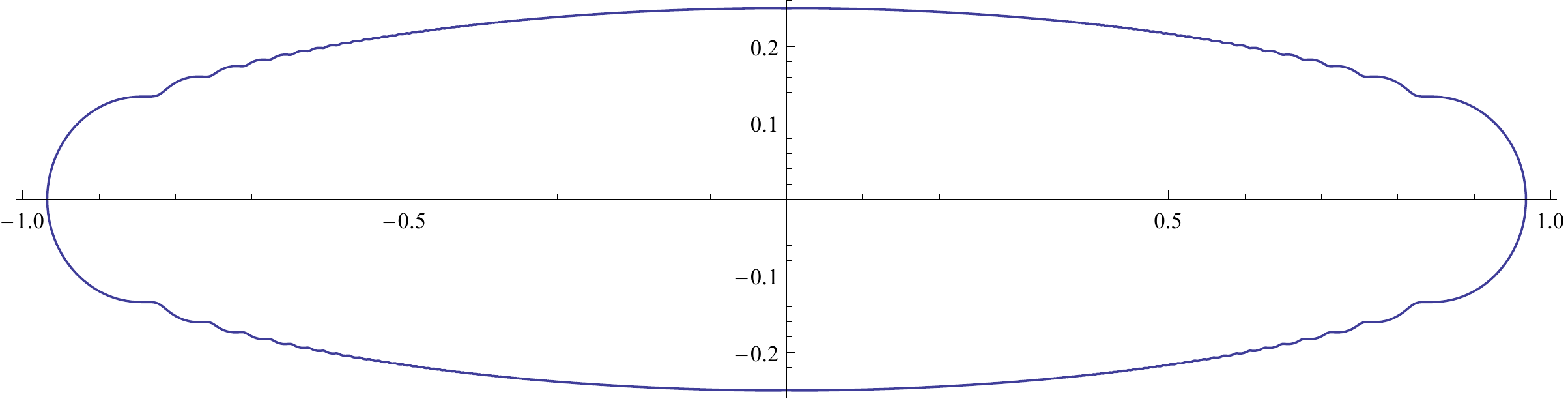}
\caption{The ellipse polynomial approximation}
  \end{center}
    
\end{figure}

  Here we consider the 20th square root iterations and 1000 degree polynomial (Fig.7). Similar picture under only polynomial approximation due to the point crowding phenomenon happens for polynomial of degree $10^4$.

  \begin{figure}[h]\label{fig4}
   \begin{center}
\includegraphics[width=8truecm,height=2.3truecm]{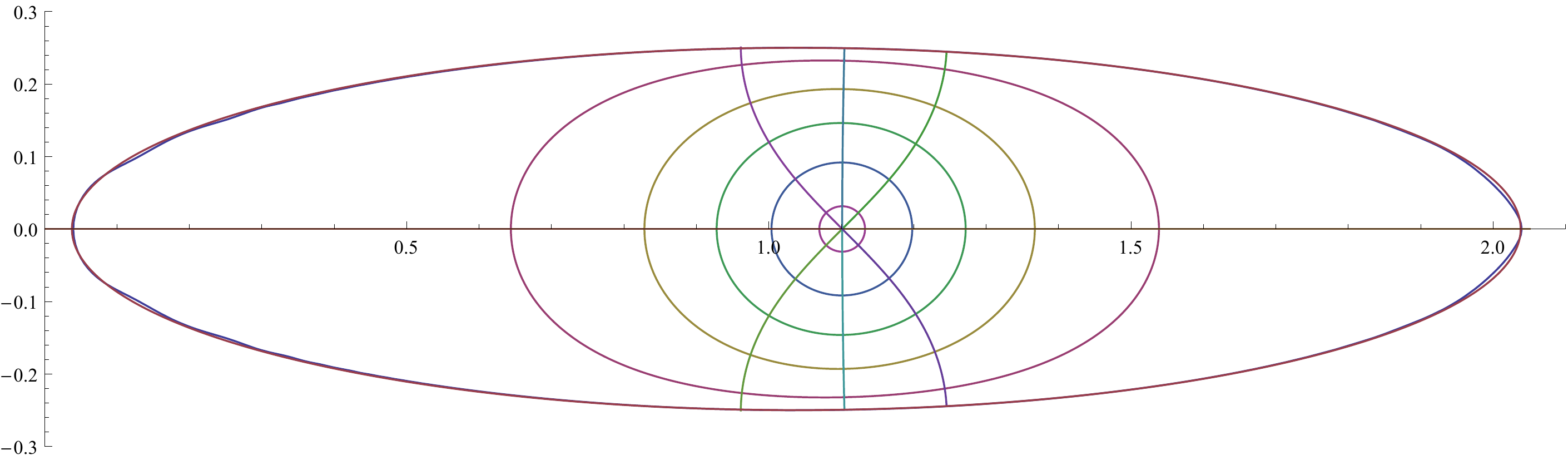}
\caption{The ellipse fraction polynomial approximation}
  \end{center}
\end{figure}


Pyotr N. Ivanshin

Department of Mechanics and Mathematics, 

Kazan Federal University, 

420008, Kremlyovskaya, 18, 

Kazan, Russia

e-mail:pivanshi@yandex.ru
\end{document}